\newtheorem{Th}{Theorem}
\newtheorem*{Prop}{Proposition}
\newtheorem{Lm}{Lemma}
\newtheorem*{Addth}{Addendum to Theorem~\ref{ThBE}}
\theoremstyle{definition}
\newtheorem{Def}{Definition}
\newtheorem{Rem}{Remark}
\renewcommand{\ge}{\geqslant}
\renewcommand{\le}{\leqslant}
\newcommand{\zcz}{z\ii\C[[z\ii]]}
\newcommand{\al}{\alpha}
\newcommand{\be}{\beta}
\newcommand{\de}{\delta}
\newcommand{\De}{\Delta}
\newcommand{\ga}{\gamma}
\newcommand{\Ga}{\Gamma}
\newcommand{\sig}{\sigma}
\renewcommand{\th}{\theta}
\newcommand{\ph}{\varphi}
\newcommand{\eps}{\varepsilon}
\newcommand{\ka}{\kappa}
\newcommand{\la}{\lambda}
\newcommand{\La}{\Lambda}
\newcommand{\om}{\omega}
\newcommand{\ze}{\zeta}
\newcommand{\dd}{{\mathrm d}}      
\newcommand{\ee}{{\mathrm e}}      
\newcommand{\id}{\operatorname{id}}
\newcommand{\ID}{\operatorname{Id}}
\newcommand{\defeq}{\coloneqq} 
\newcommand{\col}{\colon\thinspace}          
\newcommand{\demi}{\tfrac{1}{2}}
\newcommand{\ii}{^{-1}}
\newcommand{\ti}{\tilde}
\newcommand{\ens}{\enspace}
\newcommand{\ie}{\emph{i.e.}\ }
\newcommand{\eg}{\emph{e.g.}\ }
\newcommand{\cf}{\emph{cf.}}
\DeclareMathOperator{\IM}{Im}         
 \DeclareMathOperator{\RE}{Re}        
\DeclareMathOperator{\dist}{dist}     
\newcommand{\cont}{\operatorname{cont}}
\newcommand{\I}{\mathrm{i}}
\newcommand{\C}{\mathbb{C}}      
\newcommand{\N}{\mathbb{N}}        
\newcommand{\R}{\mathbb{R}}        
\newcommand{\Z}{\mathbb{Z}}        
\newcommand{\cL}{ {\cal L }}
\newcommand{\Ha}{ {\cal H }}
\newcommand{\gA}{\mathscr A}       
\newcommand{\gB}{\mathscr B}       
\newcommand{\gE}{\mathscr E}       
\newcommand{\gR}{\mathscr R}       
\newcommand{\beglabel}[1]{\begin{equation}	\label{#1}}
\newcommand{\elabel}{\end{equation}}
\DeclarePairedDelimiter\abs{\lvert}{\rvert}%
\DeclarePairedDelimiter\norm{\lVert}{\rVert}%
\newcommand{\up}{^\textrm{up}}
\newcommand{\low}{^\textrm{low}}
\newcommand{\uplow}{^\textrm{up/low}}
\newcommand{\Clog}{\C_{\log}}
\newcommand{\Al}{\operatorname{\gA^\Ga_\om}}
\newcommand{\hAl}{\operatorname{\hat\gA^\Ga_\om}}
\newcommand{\QR}{Q_R}
\newcommand{\RL}{\gR_{\eps,L}}
\newcommand{\tRL}{\ti\gR_{\eps,L}}
\newcommand{\VL}[1]{V_{\eps,L;#1}}
\newcommand{\NL}[2]{\norm{#1}_{\eps,L;#2}}
\newcommand{\bd}[1]{\big[#1\big]_d}
\newcommand{\cd}[1]{\big\{#1\big\}_d}
\DeclareMathOperator{\End}{End}
\DeclareMathOperator{\Span}{Span}
\begin{document}

\thispagestyle{empty}
\begin{center}
{\bf \LARGE
On the resurgent approach\\[1.5ex] to \'Ecalle-Voronin's invariants}\\[3ex]

Artem Dudko, David Sauzin
\end{center}
\begin{abstract}
  Given a holomorphic germ at the origin of~$\C$ with a simple
  parabolic fixed point, the Fatou coordinates have a common
  asymptotic expansion whose formal Borel transform is resurgent.
  We show how to use \'Ecalle's alien operators to study the
  singularities in the Borel plane and relate them to the horn maps,
  providing each of \'Ecalle-Voronin's invariants in the form of a
  convergent numerical series.
  The proofs are original and self-contained, with ordinary Borel
  summability as the only prerequisite.
\medskip

\noindent
{\footnotesize \emph{Keywords:}
Complex dynamics, Ecalle-Voronin invariants, Resurgent functions.}
\end{abstract}

\section{Alien operators for simple $2\pi\I\Z$-resurgent series}

We first present an extension of the classical Borel-Laplace summation
theory \cite{Ram}, \cite{Mal_cours}, \cite{Sau_cours}, to be applied to the formal Fatou coordinate of a simple
parabolic germ in next section.

We denote by~$\Clog$ the Riemann surface of the logarithm viewed as
the universal cover of~$\C^*$ with base-point at~$1$.
For $\be_1, \be_2 \in\C$ with $\RE\be_1, \RE\be_2 >-1$
and $\hat\phi_1 \in \ze^{\be_1}\C\{\ze\}$,
$\hat\phi_2 \in \ze^{\be_2}\C\{\ze\}$,
we extend the usual convolution and define
$\hat\phi_1*\hat\phi_2 \in \ze^{\be_1+\be_2+1}\C\{\ze\}$ by
\[
\hat\phi_1*\hat\phi_2(\ze) \defeq
\int_0^1
\hat\phi_1\big( (1-t)\ze \big) \hat\phi_2(t\ze)
\ze\,\dd t
\quad \text{for $\ze\in\Clog$ with $\abs{\ze}$ small enough.}
\]
For $\be\in\C$ with $\RE\be>-1$, we extend the classical formal Borel transform
by defining
\[
\gB \col
\ti\phi = \sum_{n\geq0} c_n z^{-n-\be-1} \in z^{-\be-1}\C[[z\ii]]
\mapsto
\hat\phi \defeq \sum_{n\geq0} c_n \tfrac{\ze^{n+\be}}{\Ga(n+\be+1)}
\in \ze^{\be}\C[[\ze]].
\]
Observe that if $\gB\ti\phi_1 = \hat\phi_1 \in \ze^{\be_1} \C\{\ze\}$
and $\gB\ti\phi_2 = \hat\phi_2 \in \ze^{\be_2} \C\{\ze\}$
then $\gB(\ti\phi_1 \ti\phi_2) =\hat\phi_1*\hat\phi_2$.


\begin{Def} \mbox{}\vspace{-1ex}

\begin{itemize}
\item[--]
  Given $\be\in \C$ with $\RE\be>-1$, if $\hat\phi \in
  \ze^\be\C\{\ze\}$ converges for any~$\ze$ on~$\Clog$ with $\abs{\ze}<2\pi$ and extends analytically
  along any path of $\C\setminus 2\pi\I\Z$ issuing from~$1$,
  then $\hat\phi$ and $\ti\phi = \gB\ii\hat\phi$ are said to be \emph{$2\pi\I\Z$-resurgent}.
\item[--]
Given $\om \in 2\pi\I\Z$ and
a path $\Ga$ in $\C\setminus 2\pi\I\Z$ going from~$1$ to $\om+1$,
if $\hat\phi$ is $2\pi\I\Z$-resurgent and if its analytic continuation along~$\Ga$,
which is a holomorphic germ $\cont_\Ga\hat\phi$ at $\om+1$, takes the form
\beglabel{eqsimplsing}
\cont_\Ga\hat\phi(\om+\ze) = \frac{c}{2\pi\I\ze} +
\hat\chi(\ze) \frac{\log\ze}{2\pi\I}
+ \hat R(\ze)
\quad \text{for $\ze>0$},
\elabel
where $c\in\C$ and $\hat\chi,\hat R \in \C\{\ze\}$,
then $\hat\phi$ and $\ti\phi = \gB\ii\hat\phi$ are said to be
\emph{$(\om,\Ga)$-simple}.
\item[--]
In the above situation, we set
$\hAl\ti\phi \defeq \hat\chi$, which is the monodromy of
$\ze \mapsto \cont_\Ga\hat\phi(\om+\ze)$ around~$0$,
and $\Al\ti\phi \defeq c + \gB\ii\hat\chi \in \C[[z\ii]]$.
The operator~$\Al$ thus defined on the space of all $(\om,\Ga)$-simple
formal series is called the \emph{alien operator associated
with $(\om,\Ga)$}.
\end{itemize}
\end{Def}


\noindent
(Observe that $\hAl\ti\phi$ is itself $2\pi\I\Z$-resurgent because
$\cont_\Ga\hat\phi(\om+\ze)$ extends analytically
along any path of $\C\setminus 2\pi\I\Z$.)


\begin{Def} Let $\hat\phi$ be $2\pi\I\Z$-resurgent.
We say that $\hat\phi$ is of \emph{finite exponential type in
    non-vertical directions} if,
for any path~$\ga$ of $\C\setminus 2\pi\I\Z$ going from~$1$ to $\ze_*\in \I\R$
and any $\delta_0 \in (0,\pi/2)$,
there exist $C_0,R_0>0$ such that
\beglabel{ineqleftright}
\abs*{ \cont_\ga\hat\phi\big( \ze_* + t\,\ee^{\I\th} \big) } \leq
C_0\,\ee^{R_0 t}
\quad \text{for all $t\geq0$ and
$\th \in [-\de_0,\de_0] \cup [\pi-\de_0,\pi+\de_0]$.}
\elabel
\end{Def}


We omit the proof of the following technical statement.
\begin{Lm}\label{Lmzalphares}
Let $\ti\phi_0\in \zcz$ be $2\pi\I\Z$-resurgent with $\hat\phi_0\defeq
\gB\ti\phi_0$ of finite exponential type in non-vertical directions.
Let $\ti\phi\defeq z^{-\al}\ti\phi_0\in z^{-\be-1}\C[[z^{-1}]]$, where
$\al,\be\in \C$ and $\RE\be>-1$.
Then $\hat\phi\defeq \gB\ti\phi \in \ze^\be\C\{\ze\}$.
Moreover, for any simple curve $\ga \col (0,+\infty) \to \C\setminus
2\pi\I\Z$ of the form $\ga(t) = t u_0$ for $t<t_0$
and $\ga(t) = c + t u_1$ for $t>t_1$, with $\abs{u_0} = \abs{u_1} =
1$, $u_1\neq\pm\I$, $c\in\C$ and $0 < t_0 < t_1$,
and for any lift~$\ti\ga$ of~$\ga$ in~$\Clog$,
the germ~$\hat\phi$ admits analytic continuation along~$\ti\ga$ and
%
%
\beglabel{eqintegralzalpha}
\int_{\ti\ga} \ee^{-z\ze} \hat\phi(\ze) \,\dd\ze =
z^{-\al} \int_\ga e^{-z\ze} \hat\phi_0(\ze) \,\dd\ze
\elabel
for all $z\in\Clog$ with
$\arg z\in(-\theta-\tfrac{\pi}{2},-\theta+\tfrac{\pi}{2})$
and $\RE(ze^{\I\theta})$ large enough,
where $\th\in\R$ is the argument of~$u_1$ which determines the
asymptotic direction of~$\ti\ga$.

In particular, $\cL^\th\hat\phi(z) \defeq \int_0^{ \ee^{\I\th} \infty}
\ee^{-z\ze}\hat\phi(\ze)\,\dd\ze$ is well defined and coincides with
$z^{-\al}\cL^\th\hat\phi_0(z)$ for $z \in \Clog$ as above.
\end{Lm}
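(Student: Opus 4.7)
The plan is to reduce everything to a Riemann--Liouville-type fractional-integral representation of~$\hat\phi$ in terms of~$\hat\phi_0$. Since multiplication of formal series corresponds to convolution of Borel transforms and $\gB(z^{-\al}) = \ze^{\al-1}/\Ga(\al)$ for $\RE\al>0$, one expects
\[
\hat\phi(\ze) \;=\; \frac{1}{\Ga(\al)}\int_0^\ze (\ze-\eta)^{\al-1}\,\hat\phi_0(\eta)\,\dd\eta, \qquad (\star)
\]
and once $(\star)$ is established the three conclusions follow from routine manipulations. Writing $\hat\phi_0(\ze) = \sum_{n\ge 0} c_n\ze^n/n!$, the defining formula of~$\gB$ applied to $z^{-\al}\ti\phi_0 = \sum c_n z^{-n-\al-1}$ gives $\hat\phi(\ze) = \sum c_n\ze^{n+\al}/\Ga(n+\al+1)$; since $n!/\Ga(n+\al+1)$ has only polynomial growth, this series converges wherever $\hat\phi_0$ does, proving $\hat\phi \in \ze^\al\C\{\ze\} \subset \ze^\be\C\{\ze\}$. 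Identity~$(\star)$ itself is checked term by term via the Beta integral $\int_0^1(1-t)^{\al-1}t^n\,\dd t = \Ga(\al)\Ga(n+1)/\Ga(\al+n+1)$ when $\RE\al>0$, and extended to $\RE\al\in(-1,0]$ by one integration by parts, which replaces $\hat\phi_0$ by $\hat\phi_0'$ and~$\al$ by~$\al+1$.

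\textbf{Analytic continuation along~$\ti\ga$.} For any endpoint $\ze = \ga(T)$, deform the segment $[0,\ze]$ in~$(\star)$ into the arc of~$\ga$ from $\ga(0)$ to~$\ga(T)$, tracking the determination of $(\ze-\eta)^{\al-1}$ as~$\eta$ traverses~$\ga$. Because $\hat\phi_0$ continues analytically throughout $\C\setminus 2\pi\I\Z$ along~$\ga$ and the deformation stays in this set, no singularity is encountered; the resulting integral furnishes the analytic continuation of~$\hat\phi$ along the lift~$\ti\ga$ in~$\Clog$.

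\textbf{Laplace identity.} Inserting~$(\star)$ into the Laplace integral and interchanging the order of integration,
\[
\int_{\ti\ga}\ee^{-z\ze}\hat\phi(\ze)\,\dd\ze
\;=\; \frac{1}{\Ga(\al)}\int_\ga \hat\phi_0(\eta)\,\bigg(\int_\eta^{\infty u_1}\ee^{-z\ze}(\ze-\eta)^{\al-1}\,\dd\ze\bigg)\,\dd\eta.
\]
With $u_1 = \ee^{\I\th}$ and the substitution $\ze = \eta + s u_1$ the inner integral becomes $\ee^{-z\eta}\,u_1\int_0^\infty \ee^{-z u_1 s}s^{\al-1}\,\dd s$, and a rotation of contour to the positive real axis turns it into $\ee^{-z\eta}\,\Ga(\al)\,z^{-\al}$, with the branch of~$z^{-\al}$ determined by $\arg z\in(-\th-\tfrac{\pi}{2},-\th+\tfrac{\pi}{2})$. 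Substituting back yields~\eqref{eqintegralzalpha}, and taking~$\ga$ along the ray $\ee^{\I\th}(0,+\infty)$ recovers the statement about~$\cL^\th$.

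The expected main obstacle is the Laplace step: the Fubini interchange and the subsequent contour rotation must be justified uniformly in~$z$ on the stated domain, which is precisely where the finite-exponential-type hypothesis on~$\hat\phi_0$ in non-vertical directions (bound~\eqref{ineqleftright}) together with $u_1\ne\pm\I$ become essential — they ensure absolute integrability along the tail of~$\ga$ and kill the contribution of the arcs at infinity during the rotation. A second delicate point is keeping the branch of~$z^{-\al}$ coherent with the lift~$\ti\ga$ in~$\Clog$.
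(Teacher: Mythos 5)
The paper explicitly omits the proof of this lemma (``We omit the proof of the following technical statement''), so there is no in-text argument to compare against. Your Riemann--Liouville (fractional-integral) route is a natural and essentially sound way to fill that gap; the identity $(\star)$, the term-by-term Beta-integral verification, the Fubini interchange, and the contour rotation giving $\Ga(\al)z^{-\al}$ are all the right ingredients, and you correctly single out the two genuinely delicate points (finite exponential type of $\hat\phi_0$ in non-vertical directions with $u_1\neq\pm\I$ for absolute integrability and for killing the arcs at infinity; coherence of the branch of $z^{-\al}$ with the lift $\ti\ga$).

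Two corrections are in order. First, the containment is reversed: for $\be=\al+m$ with $m\geq 0$ one has $\ze^\be\C\{\ze\}\subset\ze^\al\C\{\ze\}$, not the other way round. The correct observation is that the hypothesis $\ti\phi=z^{-\al}\ti\phi_0\in z^{-\be-1}\C[[z\ii]]$ forces $m\defeq\be-\al\in\Z_{\geq 0}$ with the first $m$ Taylor coefficients of $\hat\phi_0$ vanishing, whence $\hat\phi=\sum_{n\geq m}c_n\ze^{n+\al}/\Ga(n+\al+1)$ starts at $\ze^{\be}$ and indeed lies in $\ze^\be\C\{\ze\}$. Second, and more substantively, a single integration by parts only extends $(\star)$ to $\RE\al\in(-1,0]$, whereas the lemma is stated under the sole constraint $\RE\be>-1$, i.e.\ $\RE\al>-1-m$, which can be arbitrarily negative. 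You need to iterate the integration by parts (keeping the boundary terms, which are the finitely many monomials $c_n\ze^{n+\al}/\Ga(n+\al+1)$) enough times to reach a convergent kernel $(\ze-\eta)^{\al+j-1}$ with $\RE(\al+j)>0$; the same iterated form should then be fed into the Fubini step, since otherwise the inner integral $\int_0^\infty\ee^{-zu_1 s}s^{\al-1}\dd s$ diverges at $s=0$ when $\RE\al\leq 0$. Once these two points are patched, the argument goes through as you outline it.
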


%


\begin{Lm}   \label{LmHankel}
Let $\th\in(-\frac{\pi}{2},\frac{\pi}{2}) \cup
(\frac{\pi}{2},\frac{3\pi}{2})$ and denote by~$\Ha_\th$ a $\th$-rotated
Hankel contour, \ie a contour in~$\Clog$ which goes along a ray from
$\ee^{\I(\th-2\pi)}\infty$ to $\ee^{\I(\th-2\pi)}\eps$ (with $0<\eps<\pi$), circles
counterclockwise~$0$ and then follows the ray from $\ee^{\I\th}\eps$
to $\ee^{\I\th}\infty$.
Suppose that $\hat\phi$ is $(\om,\Ga)$-simple and $\ze \mapsto
\cont_\Ga\hat\phi(\om+\ze)$ has at most exponential growth
along~$\Ha_\th$, then
\[
\int_{\Ha_\th} \ee^{-z\ze} \cont_{\Ga}\hat\phi(\om+\ze)
\, \dd\ze =
c + \cL^\th\hat\chi(z),
\]
where $\Al\gB\ii\hat\phi = c + \gB\ii\hat\chi$, 
$\arg z\in(-\theta-\tfrac{\pi}{2},-\theta+\tfrac{\pi}{2})$ 
and $\RE(ze^{\I\theta})>>0$.
\end{Lm}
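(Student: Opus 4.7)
My plan is to split $\Ha_\th$ into the down-ray, a small circular arc of radius $\eps>0$ around~$0$, and the up-ray, and to let $\eps\to 0$. On the arc one expands $\cont_\Ga\hat\phi(\om+\zeta)$ via the local decomposition~\eqref{eqsimplsing}; on the two rays one exploits the fact that they correspond to two different determinations of $\cont_\Ga\hat\phi(\om+\,\cdot\,)$ whose difference is precisely the monodromy $\hat\chi$.

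Concretely, I would parametrize both rays by $t=\abs{\zeta}$ with $\zeta=t\ee^{\I\th}$ as a complex number, and denote by $\psi_+(t)$ and $\psi_-(t)$ the values of $\cont_\Ga\hat\phi(\om+\zeta)$ on the outgoing (up) and incoming (down) rays, respectively. In $\Clog$ the two rays are separated by a counterclockwise rotation of $2\pi$, so the determination of $\log\zeta$ on the up-ray exceeds the one on the down-ray by $2\pi\I$; \eqref{eqsimplsing} then gives $\psi_+(t)-\psi_-(t)=\hat\chi(t\ee^{\I\th})$ for small $t$. Both sides being holomorphic functions of $t$ on a common subset of $(0,+\infty)$---the $\psi_\pm$ exist on the whole ray by the analytic-continuation content of the exponential-growth hypothesis, while $\hat\chi$ extends along the ray thanks to its $2\pi\I\Z$-resurgence---the identity propagates to all $t>0$. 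Taking the opposite orientations of the two rays into account, their combined contribution to the integral is $\int_\eps^{+\infty}\ee^{-zt\ee^{\I\th}}\hat\chi(t\ee^{\I\th})\,\ee^{\I\th}\,\dd t$, which tends to $\cL^\th\hat\chi(z)$ as $\eps\to 0$.

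For the arc, substituting~\eqref{eqsimplsing} splits the integral into three pieces. The simple-pole term yields $\int_{\th-2\pi}^{\th}\tfrac{c}{2\pi}\ee^{-z\eps\ee^{\I s}}\,\dd s$, which tends to $c$ as $\eps\to 0$ by dominated convergence. The holomorphic remainder $\hat R(\zeta)$ is bounded on the arc of length $2\pi\eps$, so it contributes $O(\eps)$; the logarithmic term $\hat\chi(\zeta)\log\zeta/(2\pi\I)$ is $O(\log(1/\eps))$ on the arc and contributes $O(\eps\log(1/\eps))$. Both vanish in the limit, and summing the three pieces gives $c+\cL^\th\hat\chi(z)$, as claimed.

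The main obstacle is the global identification $\psi_+-\psi_-=\hat\chi$ on the whole rays: the decomposition~\eqref{eqsimplsing} is only a local statement near~$0$, and one has to argue that this identity propagates outward by analytic continuation. This rests on the $2\pi\I\Z$-resurgence of $\hat\chi$ (noted in the parenthetical remark after the definition) together with the existence of analytic continuation of $\cont_\Ga\hat\phi(\om+\,\cdot\,)$ along $\Ha_\th$ that is built into the exponential-growth hypothesis. Once this is settled, the rest reduces to routine estimates of Laplace integrals and of integrals on a shrinking arc.
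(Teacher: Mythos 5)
Your proof is correct and follows essentially the same route as the paper's (which compresses the entire argument into two lines: the pole term contributes $c$, the rest contributes $\int_{\ee^{\I\th}\eps}^{\ee^{\I\th}\infty}\ee^{-z\ze}\hat\chi(\ze)\,\dd\ze + O(\eps\abs{\ln\eps})$). You substitute the local decomposition~\eqref{eqsimplsing}, extract the monodromy $\psi_+ - \psi_- = \hat\chi$ as the combined ray contribution, and let the arc shrink; the paper is doing exactly this, just without spelling out the monodromy bookkeeping, the cancellation of the $1/\ze$ and $\hat R$ pieces along the rays, or the $O(\eps\abs{\ln\eps})$ estimate on the arc. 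The one small point worth making explicit (and which the paper also leaves implicit) is that the Hankel integral is independent of $\eps\in(0,\pi)$ by Cauchy's theorem applied to the punctured neighbourhood of~$0$ where $\cont_\Ga\hat\phi(\om+\cdot)$ is holomorphic; it is this independence that lets you identify the fixed-$\eps$ integral with its $\eps\to 0$ limit. Your worry about propagating $\psi_+ - \psi_- = \hat\chi$ along the whole rays is addressed exactly as you say: $\hat\chi$ is the monodromy of $\cont_\Ga\hat\phi(\om+\cdot)$, hence its analytic continuation along the ray $\ee^{\I\th}\R_{>0}$ (which avoids $2\pi\I\Z$ because $\th\neq\pm\tfrac{\pi}{2}$) is the difference of the two determinations on the two sides of the Hankel contour, both of which exist and have exponential growth by hypothesis.
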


\begin{proof}
The term $\frac{c}{2\pi\I\ze}$ contributes~$c$, 
the remainder contributes
$\int_{\ee^{\I\th}\eps}^{\ee^{\I\th}\infty} \ee^{-z\ze}\hat\chi(\ze)\,\dd\ze
+ O(\eps\abs{\ln\eps})$.
\end{proof}


\begin{Lm}    \label{LmCommutAl}
If $b_0 \in \C\{z\ii\}$ and $\ti\phi$ is $2\pi\I\Z$-resurgent, then
the formal series $b_0\ti\phi$ and $C_{\id+b_0}\ti\phi \defeq
\ti\phi\circ(\id+b_0)$ are $2\pi\I\Z$-resurgent.
If moreover $\ti\phi$ is $(\om,\Ga)$-simple, then they are also
$(\om,\Ga)$-simple, with
\[
\Al(b_0\ti\phi) = b_0 \Al\ti\phi,
\qquad
\Al C_{\id+b_0}\ti\phi =
\ee^{-\om b_0} C_{\id+b_0}\Al\ti\phi.
\]
\end{Lm}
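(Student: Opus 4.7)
The plan is to establish the multiplicativity $\Al(b_0\ti\phi)=b_0\,\Al\ti\phi$ first, then deduce the composition formula by Taylor-expanding $\ti\phi\circ(\id+b_0)$ and exploiting a commutation between $\Al$ and $\pa$. Once and for all, decompose $b_0=b_{00}+\ti b$ with $b_{00}\in\C$ and $\ti b\in z\ii\C\{z\ii\}$; since $b_0$ is convergent, $\hat{\ti b}\defeq\gB\ti b$ is entire of exponential type.

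For the multiplication, the starting point is $\gB(b_0\ti\phi)=b_{00}\hat\phi+\hat{\ti b}*\hat\phi$. The $2\pi\I\Z$-resurgence of the convolution is obtained by deforming the integration contour $0\to\ze$ to track any path in $\C\setminus 2\pi\I\Z$ issuing from~$1$: the entirety of $\hat{\ti b}$ lets the integrand survive the deformation, while the deformed $s$-contour stays within the domain of $\hat\phi$. To locate the singularity at $\om$, I would split the contour into a piece bounded away from $\om$ (contributing an analytic term near $\ze=\om$) and a piece traversing $\om$; on the latter, the change of variable $u=s-\om$ together with the substitution of $\cont_\Ga\hat\phi(\om+u)=\frac{c}{2\pi\I u}+\hat\chi(u)\frac{\log u}{2\pi\I}+\hat R(u)$ reduces matters to two elementary integrals. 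The pole part is handled by Taylor-expanding $\hat{\ti b}(\tau-u)$ about $u=0$, and the logarithmic part by the rescaling $u=\tau v$. The singular part of $(\hat{\ti b}*\hat\phi)(\om+\tau)$ should come out as $\frac{\log\tau}{2\pi\I}\bigl[c\,\hat{\ti b}(\tau)+(\hat{\ti b}*\hat\chi)(\tau)\bigr]$ with no $1/\tau$ pole; combined with the $b_{00}\hat\phi$ contribution this yields simple-singularity data $c'=b_{00}c$ and $\hat\chi'=b_{00}\hat\chi+c\,\hat{\ti b}+\hat{\ti b}*\hat\chi$, whose image under $\gB\ii$ is exactly $b_0(c+\gB\ii\hat\chi)=b_0\,\Al\ti\phi$.

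For the composition, first record the commutation $\Al\circ\pa=(\pa-\om)\circ\Al$: since $\gB\circ\pa=-\ze\cdot\gB$, reading off simple-singularity data from $\cont_\Ga\gB(\pa\ti\phi)(\om+\ze)=-(\om+\ze)\cont_\Ga\hat\phi(\om+\ze)$ gives it at once, and iteration produces $\Al(\pa^k\ti\phi)=(\pa-\om)^k\Al\ti\phi$. The formal Taylor expansion $C_{\id+b_0}\ti\phi=\sum_{k\geq 0}\tfrac{b_0^k}{k!}\pa^k\ti\phi$ is well-defined coefficient-wise in $\C[[z\ii]]$; applying $\Al$ termwise and combining multiplicativity with the commutation,
\[
\Al\,C_{\id+b_0}\ti\phi=\sum_{k\geq 0}\tfrac{b_0^k}{k!}(\pa-\om)^k\Al\ti\phi=\ee^{-\om b_0}\sum_{j\geq 0}\tfrac{b_0^j}{j!}\pa^j\Al\ti\phi=\ee^{-\om b_0}\,C_{\id+b_0}\Al\ti\phi,
\]
where the middle equality is obtained by expanding $(\pa-\om)^k$ binomially, reindexing, and recognising the exponential series; note that $\ee^{-\om b_0}\in\C\{z\ii\}$ because $b_0$ is convergent.

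The main obstacle will be justifying the termwise application of $\Al$ to the Taylor expansion of $C_{\id+b_0}\ti\phi$: one must show that its Borel transform inherits both analytic continuation along paths in $\C\setminus 2\pi\I\Z$ and the simple-singularity structure at $\om$ from the individual summands. This calls for uniform convolutive Borel estimates on compact subsets of such paths, exploiting the exponential type of $\hat{\ti b}$ and the controlled analytic continuation of $\hat\phi$ along $\Ga$. Once this technical estimate is secured, the algebraic manipulation above delivers both identities.
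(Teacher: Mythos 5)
Your treatment of the multiplication identity follows essentially the same route as the paper: decompose $b_0=b_{00}+\ti b$ (the paper simply reduces to $b_0\in z\ii\C\{z\ii\}$), express $\gB(b_0\ti\phi)$ as a convolution with the entire function $\hat{\ti b}$, continue the integral along the concatenated path, and extract the simple-singularity data near~$\om$; the resulting $c'=b_{00}c$ and $\hat\chi'=b_{00}\hat\chi+c\,\hat{\ti b}+\hat{\ti b}*\hat\chi$ are correct and assemble into $b_0\,\Al\ti\phi$ as claimed.

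For the composition identity you take a genuinely different route. The paper does not Taylor-expand and apply $\Al$ termwise; instead it packages the entire expansion into a single integral transform in the Borel plane, namely $\hat\psi_2(\ze)=\int_0^\ze K_2(\xi,\ze)\hat\phi(\xi)\,\dd\xi$ with the kernel $K_2(\xi,\ze)=\sum_{k\geq1}\frac{(-\xi)^k}{k!}\hat b_0^{*k}(\ze-\xi)$, which is holomorphic on $\C\times\C$. This resums the Taylor series \emph{before} any singularity analysis, so that $2\pi\I\Z$-resurgence and $(\om,\Ga)$-simplicity are read off from a single integral whose kernel is entire. Your approach---establishing $\Al\circ\pa=(\pa-\om)\circ\Al$ (which you verify correctly), applying $\Al$ termwise to $\sum_k\frac{b_0^k}{k!}\pa^k\ti\phi$, and recombining binomially into $\ee^{-\om b_0}C_{\id+b_0}\Al\ti\phi$---is algebraically cleaner and makes the structure of the exponential prefactor more transparent, but it hinges on the very point you flag as ``the main obstacle'': a topology in which $\Al$ is continuous and the series of $(\om,\Ga)$-simple terms converges to an $(\om,\Ga)$-simple limit. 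Formal (coefficient-wise) convergence in $\C[[z\ii]]$ does not by itself give this; one needs locally uniform convergence of the Borel transforms along paths in $\C\setminus2\pi\I\Z$ together with control of the pole and monodromy data near~$\om$. The paper's kernel $K_2$ is precisely the device that secures those uniform estimates, since its entirety follows from classical bounds on convolution powers of an entire function of exponential type (the same inequality~(9) of~\cite{DSun} used elsewhere in the paper). So your proposal is a valid alternative in outline, and it honestly identifies the gap, but closing that gap would in practice lead you back to the paper's kernel representation (or something equivalent); without that, the termwise exchange remains unjustified. A hybrid---use $K_2$ to get resurgence and simplicity, then your commutation with $\pa$ to compute $\Al C_{\id+b_0}\ti\phi$---would be both rigorous and clean.
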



\begin{proof}[Idea of the proof]
Start with $b_0 \in z\ii\C\{z\ii\}$,
hence $\hat b_0(\ze)$ is entire;
%
%
$\ti\psi_1 \defeq {b_0} \ti\phi$ and
$\ti\psi_2 \defeq C_{\id+{b_0}}\ti\phi - \ti\phi$
have Borel images
\beglabel{eqhatpsij}
\hat\psi_j(\ze) = \int_0^\ze K_j(\xi,\ze) \hat\phi(\xi) \, \dd\xi
\quad \text{for $\ze\in\Clog$ with $\abs{\ze} < 2\pi$,}
\qquad j=1,2,
\elabel
where $K_1(\xi,\ze) = \hat b_0(\ze-\xi)$ and
$K_2(\xi,\ze) = \sum_{k\geq1} \frac{(-\xi)^k}{k!} \hat b_0^{*k}(\ze-\xi)$
are holomorphic in $\C\times\C$
(\cf\ the proof of Lemma~2 in \cite{DSun}).
The analytic continuation of~$\hat\psi_1$ or~$\hat\psi_2$ along a
path $\ga \col [0,1]\to\C\setminus2\pi\I\Z$ is thus given by the same
integral as~\eqref{eqhatpsij}, but integrating
on the concatenation $[0,\ga(0)] + \ga + [\ga(1),\ze]$.
It is then possible to analyze the singularities of~$\hat\psi_1$
and~$\hat\psi_2$ at~$\om$.
We omit the details.
%
%
\end{proof}



The above is inspired by \'Ecalle's resurgence theory \cite{Eca81}.
In fact, $2\pi\I\Z$-resurgent series are stable under multiplication
(see \cite{Eca81}, \cite{Sau13} or \cite{Sau14}),
and so are the $(\om,\Ga)$-simple ones, but we shall not require these
facts in this article,
nor the far-reaching extension of the framework which allows to define
alien operators or alien derivations in much more general situations.

\section{The singular structure of the formal Borel transform of the
  formal Fatou coordinate}

We use the same notations as in \cite{DSun}:
$f = (\id+b)\circ(\id+1) = z + 1 - \rho z\ii + O(z^{-2})$ is a simple parabolic germ at~$\infty$;
under the change of unknown $v(z) = z + \rho\log z + \ph(z)$
the equation $v\circ f = v+1$ is transformed into
\beglabel{eqphbis}
C_{\id-1} \ph = C_{\id+b} \ph + b_*,
\qquad b_*(z) \defeq b(z) + \rho\log\tfrac{1+z\ii b(z)}{1-z\ii} \in z^{-2}\C\{z\ii\}
\elabel
which has a unique solution $\ti\ph\in\zcz$,
shown to be $2\pi\I\Z$-resurgent and Borel summable.
Setting $\hat\ph\defeq\gB\ti\ph$, one gets two normalized Fatou coordinates
$v_*^\pm(z) = z + \rho \log z + \cL^\pm\hat\ph(z)$, 
where $\cL^+\hat\ph(z)$ and~$\cL^-\hat\ph(z)$ are Laplace transforms
along~$\R^+$ and~$\R^-$, holomorphic for $\arg z \in (-\pi,\pi)$ and
$\arg z \in (0,2\pi)$,
and a pair of normalized lifted horn maps 
$h_*\uplow = v_*^+ \circ (v_*^-)\ii$
(or \'Ecalle-Voronin modulus---see \cite{Eca81}, \cite{Vor81},
\cite{DSun}):
\begin{align*}
& h_*\up(Z) = Z + \sum_{m\geq1} A_m \ee^{2\pi\I m Z} 
\quad \text{for $\IM  Z >>0$}, \\[1ex]
\quad
& h_*\low(Z) = Z -2\pi\I\rho + \sum_{m\geq1} A_{-m} \ee^{-2\pi\I m Z}
\quad \text{for $\IM  Z <<0$}.
\end{align*}
We are interested in the \'Ecalle-Voronin invariants $A_m$, $m\in\Z^*$.


Let $\om \in 2\pi\I\Z^*$ and $\Ga$ be a path in $\C\setminus2\pi\I\Z$
going from~$1$ to $\om+1$.
In general $\ti\ph$ is not $(\om,\Ga)$-simple, but we shall directly
prove that, up to a linear combination of monomials,
$z^{\rho\om}\ti\ph$ is $(\om,\Ga)$-simple.
More precisely, we shall prove the $(\om,\Ga)$-simplicity of $\ti\Phi \defeq
z^{\rho\om}\{\ti\ph\}_N$, with the notation
\[
\ti\ph = [\ti\ph]_N + \{\ti\ph\}_N,
\qquad
[\ti\ph]_N \in \Span( z\ii, z^{-2}, \ldots, z^{-N} ),
\quad \{\ti\ph\}_N \in z^{-N-1}\C[[z\ii]],
\]
where $N\geq \max\{ 0, -2\RE\al \}$ is integer, with $\al \defeq -\rho\om$.
Observe that $\ti\Phi \in z^{-\be-1}\C[[z\ii]]$ with $\be \defeq
\al+N$ and $\RE\be \geq 0$.
We shall also compute $\Al\ti\Phi$ and relate it to the lifted horn maps of~$f$.

%
We first introduce an auxiliary sequence of formal series, using the
same operator \[ E \col z^{-2}\C[[z\ii]] \to z\ii\C[[z\ii]] \]
inverse of $C_{\id-1}-\ID$
as in \cite{DSun}, but replacing~$B$ with
\[ B^\om \col \ti\psi \mapsto
\ee^{-\om b_*} C_{\id+b} \ti\psi - \ti\psi =
\ee^{-\om b_*} B\ti\psi + (\ee^{-\om b_*} - 1)\ti\psi. \]


\begin{Prop}
%
%
The formal series $\ti\Psi_k = (E B^\om)^{k} 1 \in z^{-k}\C[[z\ii]]$
yield a formally convergent series $\sum\ti\Psi_k$
and $\sum_{k\geq0}\ti\Psi_k = \ee^{-\om\ti\ph}$.
For each $k\geq1$, $\ti\Psi_k$ is $2\pi\I\Z$-resurgent and, for any
$\eps,L>0$, there exist $C_0,M>0$ such that
\[
\abs{\cont_\ga \gB\ti\Psi_k} \leq C_0 \frac{M^k}{k!},
\qquad \text{for all $k\geq 1$ and $\ga \in \RL$},
\]
where $\RL$ is the set of all paths of length $< L$ issuing from~$1$ and staying at
distance $>\eps$ from $2\pi\I\Z$.
\end{Prop}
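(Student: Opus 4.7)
The plan is to separate the argument into a formal/algebraic step, an inductive resurgence step, and the main quantitative estimate on the Borel side.

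\emph{Formal part.} I would first verify $\tilde\Psi_k\in z^{-k}\C[[z^{-1}]]$ by induction on~$k$. Since $b\in z^{-1}\C\{z^{-1}\}$ and $b_*\in z^{-2}\C\{z^{-1}\}$, a Taylor expansion shows that $B=C_{\id+b}-\ID$ raises the $z^{-1}$-adic valuation by~$2$, while multiplication by $e^{-\om b_*}-1\in z^{-2}\C\{z^{-1}\}$ raises it by at least~$2$; hence $B^\om$ raises valuation by~$2$, and since $E$ lowers valuation by~$1$, $EB^\om$ raises it by~$1$. This gives both the claim and the formal convergence of $\sum\tilde\Psi_k$. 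To identify the sum with $e^{-\om\tilde\ph}$, I would set $U\defeq e^{-\om\tilde\ph}$; exponentiating~\eqref{eqphbis} and using $e^{-\om C_{\id+b}\tilde\ph}=C_{\id+b}U$ gives $(C_{\id-1}-\ID)U=B^\om U$. Writing $U=1+V$ with $V\in z^{-1}\C[[z^{-1}]]$ and applying~$E$ yields the fixed-point equation $V=EB^\om 1+EB^\om V$; since $EB^\om$ is contracting in the $z^{-1}$-adic topology, its unique solution is $V=\sum_{k\ge 1}(EB^\om)^k 1$.

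\emph{Resurgence.} I would argue by induction on~$k$, transporting the recursion to the Borel plane. There $E$ corresponds to division by $e^\ze-1$ (the simple zero of the denominator at the origin being absorbed by the vanishing of the numerator), and $B^\om$ becomes an integral operator whose kernel is entire, built from $\hat b$ and $\gB(e^{-\om b_*})$ in the spirit of the kernels $K_1,K_2$ appearing in the proof of Lemma~\ref{LmCommutAl}. Analytic continuation of $\gB\tilde\Psi_{k+1}=\gB(EB^\om\tilde\Psi_k)$ along any path of $\C\setminus 2\pi\I\Z$ is then obtained by deforming the contour of integration exactly as in that proof; the only obstructions come from the simple zeros of $e^\ze-1$, which lie in $2\pi\I\Z$.

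\emph{Quantitative estimate.} This will be the main difficulty. Following~\cite{DSun}, I would equip the space of holomorphic functions on a standard $\eps$-thickened tube around paths of length $\le L$ in $\gR_{\eps,L}$ with a suitable norm, and decompose one step of the iteration as $\gB\tilde\Psi_{k+1}=\hat E\circ\hat B^\om(\gB\tilde\Psi_k)$. The operator $\hat E$, which corresponds to division by $e^\ze-1$, is bounded by a constant depending only on~$\eps$, because $\abs{e^\ze-1}\ge c(\eps)>0$ off the $\eps$-neighborhood of $2\pi\I\Z$. The crucial factorial gain is produced by~$\hat B^\om$: after $k$ iterations the nested integrations encoded in its kernel along a path of length $\le L$ generate a factor $L^k/k!$, via the classical Nagumo-type estimate for iterated convolutions of uniformly bounded entire kernels. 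Combining these two controls yields the desired uniform bound $\abs{\cont_\ga\gB\tilde\Psi_k}\le C_0M^k/k!$ with $C_0,M$ depending on $\eps$, $L$ and on the Borel-plane sizes of $\hat b$ and $\gB(e^{-\om b_*})$. The delicate point throughout is ensuring that the contour deformations, the division by $e^\ze-1$, and the Nagumo accounting remain simultaneously valid along an arbitrary path in $\gR_{\eps,L}$.
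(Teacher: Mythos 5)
Your proposal is correct and follows essentially the same route as the paper's terse proof: exponentiate~\eqref{eqphbis} to identify $\sum\ti\Psi_k$ with $\ee^{-\om\ti\ph}$ via uniqueness of the fixed point of $C_{\id-1}\ti\Psi=\ee^{-\om b_*}C_{\id+b}\ti\Psi$ with constant term~$1$, and derive the factorial bound by iterating the Borel counterpart of $EB^\om$ (division by $\ee^\ze-1$ composed with an integral operator whose entire kernel $K^\om$ is built from $\hat c^\om\defeq\gB(\ee^{-\om b_*}-1)$ and $\hat b$, exactly as you suggest), adapting Lemma~2 of \cite{DSun}. Just carry into the quantitative step the observation you make in the resurgence step---that the zero of $\ee^\ze-1$ at the origin is absorbed by the vanishing of the numerator---since the integration paths start at $0$ and the bound $\abs{\ee^\ze-1}\ge c(\eps)$ is of no help inside the $\eps$-disc around~$0$.
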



\begin{proof}
Let $c^\om \defeq \ee^{-\om b_*} - 1$. 
The estimates are obtained by adapting
the proof of Lemma~2 of \cite{DSun}, with a new
kernel function
$K^\om(\xi,\ze) = \hat c^\om(\ze-\xi) +
\sum_{k\geq1} \frac{(-\xi)^k}{k!} \hat b^{*k}(\ze-\xi)
+ \sum_{k\geq1} \frac{(-\xi)^k}{k!} (\hat c^\om * \hat b^{*k})(\ze-\xi)$.
The formal series $\ti\Psi \defeq \sum_{k\geq0}\ti\Psi_k$ is the
unique solution with constant term~$1$ of
$C_{\id-1}\ti\Psi = \ee^{-\om b_*} C_{\id+b}\ti\Psi$%
---so is $\ee^{-\om\ti\ph}$, as can be seen by exponentiating~\eqref{eqphbis}.
\end{proof}


Since $\RE\be\geq0$,
the operator $C_{\id-1}-\ID$ maps $z^{-\be-1}\C[[z\ii]]$ to
$z^{-\be-2}\C[[z\ii]]$ bijectively; we denote by~$E_\be$ the inverse,
whose Borel counterpart is the multiplication operator
\[ \hat E_\be \col \hat\phi(\ze) \in \ze^{\be+1}\C[[\ze]] \mapsto
\frac{1}{\ee^\ze-1} \hat\phi(\ze) \in \ze^\be\C[[\ze]]. \]
We introduce an operator
\[
B_\al \col \ti\phi \in z^{-\be-1}\C[[z\ii]] \mapsto
c_\al C_{\id+b} \ti\phi - \ti\phi \in z^{-\be-2}\C[[z\ii]],
\]
with
$c_\al \defeq \big( \tfrac{1+z\ii b}{1-z\ii} \big)^{\al} \in 1 +z\ii\C\{z\ii\}$.


\begin{Th}    \label{ThBE}
Let
$b_\al \defeq z^{-\al} (1-z\ii)^{-\al} b_N$,
with
$b_N \defeq b_* - C_{\id-1} [\ti\ph]_N + C_{\id+b} [\ti\ph]_N$.
\begin{itemize}
\item[--]
The formal series $\ti\Phi_k(z) \defeq (E_\be B_\al)^k E_\be b_\al \in
z^{-\be-1}\C[[z\ii]]$, $k\geq0$, are $(\om,\Ga)$-simple,
and so is $z^{-\al}\{\ti\ph\}_N$.
\item[--]
For any $\eps,L>0$, there exist $C_0,\La>0$ with $\La<1$ such
that, for each $\ga \in \RL$,
$\abs{ \cont_\ga \gB\ti\Phi_k } \leq C_0 \La^k$.
Moreover $\hat\Phi \defeq \sum_{k\geq0}\gB\ti\Phi_k$
coincides with the formal Borel transform of $z^{-\al}\{\ti\ph\}_N$.
\item[--]
There exist complex numbers $S^\Ga_{\om,k}$, $k\geq0$, which are $O(\La^k)$ for
some $\La<1$ and satisfy
\begin{align}
\label{eqBE}
\Al \ti\Phi_k & = \sum_{k_1+k_2=k} S^\Ga_{\om,k_1} \ti\Psi_{k_2}
\quad\text{ for each $k\geq0$},\\[1ex]
\label{eqBEb}
\Al(z^{-\al}\{\ti\ph\}_N) & = S^\Ga_\om \, \ee^{-\om\ti\ph}
\quad \text{where} \ens
S^\Ga_\om = \sum_{k\geq0} S^\Ga_{\om,k}.
\end{align}
\end{itemize}
\end{Th}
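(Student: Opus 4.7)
The plan is to reformulate the theorem as the analysis of a fixed-point equation for $\ti\Phi\defeq z^{-\al}\{\ti\ph\}_N$, of which the $\ti\Phi_k$ are the Picard iterates. Subtracting $[\ti\ph]_N$ from both sides of~\eqref{eqphbis} yields $C_{\id-1}\{\ti\ph\}_N=C_{\id+b}\{\ti\ph\}_N+b_N$; multiplying by~$z^{-\al}$ and using the conjugation identities $C_{\id-1}(z^{-\al}g)=z^{-\al}(1-z\ii)^{-\al}C_{\id-1}g$, $C_{\id+b}(z^{-\al}g)=z^{-\al}(1+z\ii b)^{-\al}C_{\id+b}g$ together with the algebraic identity $c_\al(1+z\ii b)^{-\al}=(1-z\ii)^{-\al}$, one obtains $(C_{\id-1}-\ID-B_\al)\ti\Phi=b_\al$. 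Since $\RE\be\ge0$, the operator $C_{\id-1}-\ID$ restricts to a bijection $z^{-\be-1}\C[[z\ii]]\to z^{-\be-2}\C[[z\ii]]$ with inverse~$E_\be$, so $\ti\Phi$ is the unique formal solution of $\ti\Phi=E_\be b_\al+E_\be B_\al\ti\Phi$, given by the geometric sum $\sum_{k\ge 0}\ti\Phi_k$.

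Next I would carry out the Borel-plane analysis. The Borel counterpart of $E_\be$ is multiplication by $(\ee^\ze-1)\ii$, meromorphic on~$\C$ with simple poles of residue~$1$ at the points of~$2\pi\I\Z$, while the Borel counterpart of $B_\al$ is a convolution operator with entire kernel built from $\hat b$, $\hat c_\al$ and their convolution powers, of the same flavour as the one appearing in the preceding Proposition. Representing $\gB\ti\Phi_k$ as nested integrals along an arbitrary path $\ga\in\RL$ and adapting the kernel estimates of Lemma~2 of~\cite{DSun} yields the uniform bounds $\abs{\cont_\ga\gB\ti\Phi_k}\le C_0\La^k$ with $\La<1$; the contraction $\La<1$ originates in the vanishing of $\hat b_\al$ at the origin, propagated along~$\ga$ via the distance-to-$2\pi\I\Z$ hypothesis. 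Summing gives the $2\pi\I\Z$-resurgent Borel sum $\hat\Phi=\sum\gB\ti\Phi_k$, and formal uniqueness identifies it with $\gB(z^{-\al}\{\ti\ph\}_N)$. The $(\om,\Ga)$-simplicity of each $\ti\Phi_k$ then follows by deforming the integration contour past~$\om$: the simple pole of $(\ee^\ze-1)\ii$ at~$\om$ produces the polar term $c_k/(2\pi\I\ze)$ of~\eqref{eqsimplsing}, while the logarithmic monodromy $\hat\chi_k$ is itself given by iterated integrals satisfying the same geometric bound.

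For the alien-operator identities I would apply $\Al$ to $C_{\id-1}\ti\Phi=c_\al C_{\id+b}\ti\Phi+b_\al$ and invoke Lemma~\ref{LmCommutAl}. Using $\ee^\om=1$ (since $\om\in 2\pi\I\Z^*$), $\Al b_\al=0$ (because $\gB b_\al$ is entire), and the key identity $c_\al\ee^{-\om b}=\ee^{-\om b_*}$ (which follows from $\al=-\rho\om$ and the definition of~$b_*$), the equation becomes $C_{\id-1}(\Al\ti\Phi)=\ee^{-\om b_*}C_{\id+b}(\Al\ti\Phi)$, which is precisely the characterisation of $\ee^{-\om\ti\ph}=\sum\ti\Psi_k$ from the preceding Proposition. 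Uniqueness up to normalisation of the constant term yields~\eqref{eqBEb} with $S^\Ga_\om$ equal to the constant term of $\Al\ti\Phi$. For the graded version~\eqref{eqBE}, an induction on~$k$ based on the recurrence $(C_{\id-1}-\ID)\Al\ti\Phi_k=B^\om\Al\ti\Phi_{k-1}$ together with $B^\om\ti\Psi_j=(C_{\id-1}-\ID)\ti\Psi_{j+1}$ shows that $\Al\ti\Phi_k-\sum_{k_1+k_2=k,\,k_2\ge 1}S^\Ga_{\om,k_1}\ti\Psi_{k_2}$ is a constant, which I set to be $S^\Ga_{\om,k}$; since $\ti\Psi_{k_2}\in z^{-k_2}\C[[z\ii]]$ has zero constant term for $k_2\ge 1$, this $S^\Ga_{\om,k}$ coincides with the constant term of $\Al\ti\Phi_k$, hence with $2\pi\I$ times the residue of $\cont_\Ga\gB\ti\Phi_k$ at~$\om$, yielding both the estimate $|S^\Ga_{\om,k}|=O(\La^k)$ and the consistency $S^\Ga_\om=\sum_{k\ge 0}S^\Ga_{\om,k}$.

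The main obstacle is the second step: producing a uniform geometric contraction rate $\La<1$ on all paths of~$\RL$ even though the multiplier $(\ee^\ze-1)\ii$ is merely bounded (not contractive) away from its poles. This forces one to carefully track how the vanishing of $\hat b_\al$ at the origin produces a small factor on the initial portion of~$\ga$, and then to transport it through the nested integrals along~$\ga$ using the analyticity estimates of~\cite{DSun}.
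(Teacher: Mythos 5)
Your global strategy matches the paper's: derive the fixed-point equation $\ti\Phi = E_\be b_\al + E_\be B_\al\ti\Phi$ by conjugating~\eqref{eqphbis} by $z^{-\al}$, recognise $\ti\Phi_k$ as its Picard iterates, establish $(\om,\Ga)$-simplicity and the identities \eqref{eqBE}--\eqref{eqBEb} from the $\Al$-commutation relations with $E_\be$, $B_\al$, $C_{\id-1}$, $C_{\id+b}$, and close the argument by uniform estimates on the $\gB\ti\Phi_k$. Your derivation of~\eqref{eqBEb} by applying $\Al$ once to the global equation and invoking uniqueness is a legitimate shortcut, and your induction for~\eqref{eqBE} using $(C_{\id-1}-\ID)\Al\ti\Phi_k=B^\om\Al\ti\Phi_{k-1}$ is exactly what Lemma~\ref{LmCommutAlBal} encodes.

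The gap is in the estimate $\abs{\cont_\ga\gB\ti\Phi_k}\le C_0\La^k$, which you correctly flag as the crux but then misattribute. The proposed mechanism---``the contraction $\La<1$ originates in the vanishing of $\hat b_\al$ at the origin, propagated along~$\ga$''---cannot work: every $\hat\Phi_k$ lies in $\ze^\be\C\{\ze\}$ with the \emph{same} leading order (the application of $\hat B_\al$ raises the order by one and $\hat E_\be$ lowers it back), so vanishing at the origin is not improved by iteration, and the crude operator bound coming from the kernel estimates gives a constant $M(\eps,L)/(\RE\be+1)$ that has no reason to be~$<1$. The paper's actual mechanism is a two-scale argument. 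First, a Poincar\'e-type estimate (Lemma~\ref{LmEstimEbeBal}): on $\ze^{\be+\ell}\C\{\ze\}$ with weighted sup-norm $\norm{\cdot}_{\eps,L;\ell}$, one has $\NL{\hat E_\be\hat B_\al\hat\phi}{\ell}\le \tfrac{M(\eps,L)}{\RE\be+\ell+1}\NL{\hat\phi}{\ell}$; the factor $\tfrac{1}{\RE\be+\ell+1}$ comes from integrating $s^{\RE\be+\ell}$, and becomes $\le\La$ once $\ell\ge d$ for $d$ large. Second, on the finite-dimensional complement $\gE_d=\Span(\ze^\be,\dots,\ze^{\be+d-1})$, the truncated operator $A\hat\phi=\bd{\hat E_\be\hat B_\al\hat\phi}$ is \emph{triangular} with diagonal entries $\la_\ell=\al/(\be+\ell+1)$, and the constraint $N\ge -2\RE\al$ (equivalently $\RE\al>-(N+1)/2$) is used precisely to guarantee $\abs{\la_\ell}\le\La\defeq\max\{\abs{\al/(\al+N+1)},1/2\}<1$. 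Splitting $\hat\Phi_k=\bd{\hat\Phi_k}+\cd{\hat\Phi_k}$, the low-order part decays geometrically by the eigenvalue analysis and the high-order part by the Poincar\'e gain plus a discrete Gronwall step; both pieces are then combined. This decomposition---and the role played by the choice of the integer $N$---is the essential idea your proposal is missing; without it, no geometric rate $\La<1$ uniform over $\RL$ can be extracted.
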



\begin{Th}    \label{ThEVinv}
For $m \in \Z^*$, let $\Ga_m$ denote the line segment going from~$1$
to~$2\pi\I m +1$.
Then the \'Ecalle-Voronin invariants of~$f$ are given by
\[
A_{-m} = S^{\Ga_m}_{2\pi\I m} \, \ee^{-4\pi^2 m\rho}
\ens \text{if $m>0$,} 
\qquad
A_{-m} = -S^{\Ga_m}_{2\pi\I m}
\ens \text{if $m<0$.} 
\]
\end{Th}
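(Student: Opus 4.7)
The plan is to identify each Fourier coefficient~$A_{-m}$ of the lifted horn maps with the constant~$S^{\Ga_m}_{2\pi\I m}$ furnished by Theorem~\ref{ThBE}, by extracting the exponentially small $\om$-contribution to the Stokes discrepancy $(\cL^+-\cL^-)\hat\ph$ via Lemmas~\ref{Lmzalphares} and~\ref{LmHankel}.

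First I would translate the horn maps into Stokes-jump language. Setting $Z = v_*^-(z)$ in $h_*\uplow = v_*^+\circ(v_*^-)\ii$ and using $v_*^\pm(z) = z + \rho\log z + \cL^\pm\hat\ph(z)$, one gets
\[
h_*\up(Z)-Z = (\cL^+-\cL^-)\hat\ph(z) \quad(\IM z>0),
\qquad
h_*\low(Z)-Z+2\pi\I\rho = (\cL^+-\cL^-)\hat\ph(z) \quad(\IM z<0),
\]
the additive $-2\pi\I\rho$ coming from the $2\pi\I$-mismatch between the log-branches $\arg z\in(-\pi,\pi)$ and $\arg z\in(0,2\pi)$ on the lower half-plane.

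Then, for each $m\in\Z^*$, setting $\om\defeq 2\pi\I m$ and $\al\defeq-\rho\om$, Theorem~\ref{ThBE} endows $\ti\Phi = z^{-\al}\{\ti\ph\}_N$ with $(\om,\Ga_m)$-simplicity and the identity $\Al\ti\Phi = S^{\Ga_m}_\om\,\ee^{-\om\ti\ph}$, while Lemma~\ref{Lmzalphares} yields $\cL^\th\hat\ph(z) = [\ti\ph]_N(z) + z^\al\,\cL^\th\hat\Phi(z)$ (the polynomial $[\ti\ph]_N$ cancelling in the Stokes difference). I would now extract the $\om$-contribution to $(\cL^+-\cL^-)\hat\Phi$ by deforming the Laplace contour of $\cL^+\hat\Phi$ into that of $\cL^-\hat\Phi$ across the singular direction $\arg\om$, and picking up the singularity at~$\om$ by means of a Hankel loop; Lemma~\ref{LmHankel} then identifies this contribution with $\epsilon\,\ee^{-\om z}\bigl(c + \cL^\th\hat\chi(z)\bigr)$, where $c + \gB\ii\hat\chi = S^{\Ga_m}_\om\,\ee^{-\om\ti\ph}$ and $\epsilon = \pm 1$ is the orientation sign. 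The Proposition's convergent representation $\sum\gB\ti\Psi_k = \gB(\ee^{-\om\ti\ph})$, together with the multiplicativity of Borel-Laplace summation, rewrites this as $\epsilon\,S^{\Ga_m}_\om\,\ee^{-\om z}\,\ee^{-\om\cL^-\hat\ph(z)}$ (the Hankel direction~$\th$ being chosen on the appropriate side of the singular ray so that $\cL^\th\hat\ph(z) = \cL^-\hat\ph(z)$). Multiplying by the $z^\al$-prefactor and equating with
\[
A_{-m}\,\ee^{-\om Z} = A_{-m}\,\ee^{-\om z}\,z^\al_{(-)}\,\ee^{-\om\cL^-\hat\ph(z)},
\]
where $z^\al_{(-)}$ uses the $v_*^-$-branch of $\log z$, yields $A_{-m} = \epsilon\,S^{\Ga_m}_\om\cdot z^\al_{(+)}/z^\al_{(-)}$ with $z^\al_{(+)}$ the branch dictated by the deformed Laplace contour. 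On the upper half-plane the two branches coincide and the orientation forces $\epsilon=-1$, giving $A_{-m} = -S^{\Ga_m}_{2\pi\I m}$ for $m<0$; on the lower half-plane $z^\al_{(+)}/z^\al_{(-)} = \ee^{-2\pi\I\al} = \ee^{-4\pi^2 m\rho}$ and $\epsilon = +1$, giving $A_{-m} = S^{\Ga_m}_{2\pi\I m}\,\ee^{-4\pi^2 m\rho}$ for $m>0$.

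The hard part of the argument will be the bookkeeping in this last step: the contour deformation and the interchange of the summation $\sum_k$ with integration must be controlled by the uniform estimate $\abs{\cont_\ga\gB\ti\Phi_k}\leq C_0\La^k$ (with $\La<1$) of Theorem~\ref{ThBE}; the sign~$\epsilon$ must be pinned down by a careful analysis of the winding of the Hankel loop enclosing~$\om$; and each occurrence of~$z^\al$ must be tracked on the correct sheet of~$\Clog$ so that the branch ratio $z^\al_{(+)}/z^\al_{(-)}$ genuinely produces the exponential factor in the lower case.
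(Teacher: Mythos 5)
Your proposal follows essentially the same route as the paper: express $v_*^+-v_*^-$ in terms of $(\cL^+-\cL^-)\hat\ph$ together with the $-2\pi\I\rho$ log-mismatch, deform the Laplace contour across $\I\R$ to isolate Hankel contributions at the singular points $2\pi\I m$, identify each contribution via Lemma~\ref{LmHankel} and~\eqref{eqBEb} with $S^{\Ga_m}_{2\pi\I m}\ee^{-\om z}\ee^{-\om\cL^-\hat\ph}$, then read off the Fourier coefficient of $\ee^{-\om Z}$ paying attention to the $z^\al$-branch ratio and the contour orientation. The only organizational difference is that you apply Lemma~\ref{Lmzalphares} \emph{before} the deformation, writing $\cL^\th\hat\ph=[\ti\ph]_N+z^\al\cL^\th\hat\Phi$ and then deforming the contour for $\hat\Phi$, whereas the paper first deforms $(\cL^+-\cL^-)\hat\ph$ into the sum $\sum_m I_m+O(\ee^{(2\pi m_0+\pi)\IM z})$ and only then applies Lemma~\ref{Lmzalphares} to each local integral $I_m$ (which requires decomposing the Hankel loop $\ga_m$ as a difference of two paths of the form required by that lemma). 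The paper's order avoids the ambiguity of carrying a multivalued $z^\al$-prefactor across the entire deformation and keeps the branch bookkeeping local to each singularity; your order works, but you should be explicit, as the paper is, that only finitely many singularities contribute up to controlled error, so that the Fourier coefficient identification is legitimate. Your sign bookkeeping ($\epsilon=+1$ on the lower half-plane, $\epsilon=-1$ on the upper) and branch ratio $z^\al_{(+)}/z^\al_{(-)}=\ee^{-2\pi\I\al}=\ee^{-4\pi^2 m\rho}$ agree with the paper's conclusion.
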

%

\begin{Rem}
When $\al\neq0$, the series of formal series $\sum\ti\Phi_k$ is not
convergent for the topology of the formal convergence
(but it converges for the product topology $z^{-\be-1}\C[[z\ii]]
\simeq \C^{\N}$)
and, unless $-\al\in\N$ or $S_\om^\Ga = 0$, the singularity at~$\om$ of
$\cont_\Ga\gB\ti\ph$ is not ``simple'': it is rather of the form
$\cont_\Ga\gB\ti\ph(\om+\ze) =
\frac{1}{\Ga(-\al)} S_\om^\Ga \ze^{-\al-1}(1+O(\abs{\ze})) +O(1)$
if $\al\notin\Z$
or $\frac{(-1)^\al \al!}{2\pi\I} S_\om^\Ga \ze^{-\al-1}(1+O(\abs{\ze}))$
if $\al\in\N$.
\end{Rem}


\begin{Rem}
Theorem~\ref{ThEVinv} can be extracted from \cite{Eca81}, 
which gives detailed proofs only for the case $\rho=0$,
and also the case $\Ga=\Ga_m$ of~\eqref{eqBEb} with
the equivalent formulation 
$\De^+_{\om_m} \ti\ph = A_m z^{-\rho\om_m}\ee^{-\om_m\ti\ph}$
(``Bridge equation''), where $\om_m=2\pi\I m$.
The name ``resurgence'' evokes the fact that the singular behaviour near~$\om_m$
of the analytic continuation of a germ at~$0$ like~$\hat\Phi$ can be
explicitly expressed in terms of~$\hat\Phi$ itself.
\end{Rem}


The representation of $\hat\Phi = \gB(z^{-\al}\{\ti\ph\}_N)$ as the
convergent series $\sum\hat\Phi_k = \sum \gB \big( (E_\be B_\al)^k E_\be b_\al \big)$ and the
computation of the action of the alien operator~$\Al$ in~\eqref{eqBE}--\eqref{eqBEb} are new.
The point is that
$\Al\ti\Phi_k = S^\Ga_{\om,k} + O(z\ii)$, so Equation~\eqref{eqBE} says that
\[
\cont_\Ga \hat\Phi_k(\om+\ze) =
\frac{S^\Ga_{\om,k}}{2\pi\I\ze} +
\hAl\ti\Phi_k(\ze) \frac{\log\ze}{2\pi\I} +
\text{regular germ}, \quad
\hAl\ti\Phi_k = \sum_{k_1+k_2=k,\ k_2\geq1} S^\Ga_{\om,k_1}
\hat\Psi_{k_2} \in \C\{\ze\}.
\]
The numbers $S^\Ga_{\om,k}$ appear as generalized residua, for which
we can give quite explicit formulas:
%
%
\begin{Addth}
Let $\hat c_* \defeq \gB(c_\al-1)$, which is an entire function, and 
\beglabel{eqdefKal}
K_\al(\xi,\ze) = \hat c_*(\ze-\xi) +
\sum_{k\geq1} \frac{(-\xi)^k}{k!} \hat b^{*k}(\ze-\xi) +
\sum_{k\geq1} \frac{(-\xi)^k}{k!} (\hat c_* * \hat b^{*k})(\ze-\xi),
\elabel
which is holomorphic in $\C\times\C$.
Let~$\ti\Ga$ denote a parametrization of the path obtained by
concatenating the line segment $(0,1]$, the path~$\Ga$ and the line
segment $[\om+1,\om]$.
Then
\begin{align*}
S_{\om,0}^\Ga &= 2\pi\I \cont_{\ti\Ga}\hat b_\al (\om), \\[1ex]
S_{\om,k}^\Ga &= 2\pi\I \int_{\De_{\ti\Ga,k}} 
\cont_{\ti\Ga}\hat b_\al(\xi_1)
\frac{
K_\al(\xi_1,\xi_2)\cdots K_\al(\xi_{k-1},\xi_k)K_\al(\xi_k,\om)
}{
(\ee^{\xi_1}-1) \cdots (\ee^{\xi_{k-1}}-1)(\ee^{\xi_k}-1)
}
\, \dd\xi_1 \wedge \cdots \wedge \dd\xi_k,
\qquad k\ge 1,
\end{align*}
with the notation
$\De_{\ti\Ga,k} \defeq \big\{\, \big( \ti\Ga(s_1),\ldots,\ti\Ga(s_k) \big) 
\mid s_1 \le \cdots \le s_k \,\big\}$ for each positive integer~$k$.
\end{Addth}
%

Observe that, according to~\eqref{eqBEb}, each coefficient~$S_\om^\Ga$, and in particular each
\'Ecalle-Voronin invariant, can be obtained as the convergent series of these
``residua''~$S^\Ga_{\om,k}$.


\begin{proof}[Proof of Addendum to Theorem~\ref{ThBE}]
  The normal convergence of the series~\eqref{eqdefKal} follows easily
  from the estimates available for the convolution of entire functions
  (see inequality~(9) in \cite{DSun}), and the Borel counterpart
  of~$B_\al$ is the integral transform with kernel function~$K_\al$,
  hence
\beglabel{eqBalKal}
\cont_\ga \hat B_\al\hat\phi\big( \ga(s) \big) =
\int_0^s K_\al\big( \ga(\sig), \ga(s) \big) \cont_\ga \hat\phi\big( \ga(\sig) \big)
\ga'(\sig) \, \dd\sig
\elabel
for any $2\pi\I\Z$-resurgent~$\hat\phi$ and 
any path $\ga\col (0,\ell] \to \Clog$ (with any $\ell>0$) whose projection onto~$\C$
avoids $2\pi\I\Z$ and which starts as $\ga(s) = s u_0$ for $s>0$ small
enough (with fixed $u_0\in\C^*$).
Writing $\ti\Phi_k = E_\be (B_\al E_\be)^k b_\al$,
by repeated use of~\eqref{eqBalKal},
we obtain an explicit formula for the analytic
continuation of its Borel transform:
for any~$\ze$ close enough to the endpoint of a path~$\ga$ as above,
\begin{align*}
\cont_\ga\hat\Phi_0(\ze) &= \frac{1}{\ee^\ze-1} \cont_{\ga}\hat b_\al (\ze), \\[1ex]
\cont_\ga\hat\Phi_k(\ze) &= \frac{1}{\ee^\ze-1} \int_{\De_{\ga,k}} 
\cont_{\ga}\hat b_\al(\xi_1)
\frac{
K_\al(\xi_1,\xi_2)\cdots K_\al(\xi_{k-1},\xi_k)K_\al(\xi_k,\ze)
}{
(\ee^{\xi_1}-1) \cdots (\ee^{\xi_{k-1}}-1)(\ee^{\xi_k}-1)
}
\, \dd\xi_1 \wedge \cdots \wedge \dd\xi_k,
\qquad k\ge 1,
\end{align*}
with the notation
$\De_{\ga,k} \defeq \big\{\, \big( \ga(s_1),\ldots,\ga(s_k) \big) 
\mid s_1 \le \cdots \le s_k \,\big\}$.
The conclusion follows.
\end{proof}


\begin{proof}[Theorem~\ref{ThBE} implies Theorem~\ref{ThEVinv}]
For all $m\geq1$ we set $\om_m \defeq 2\pi\I m$ and
$\ti\Phi^{(m)} \defeq z^{-\al_m} \{ \ti\ph \}_{N_m}$
with $\al_m \defeq -\rho\om_m$ and
$N_m \defeq \max\{ 0, -2\RE\al_m \}$.
By \cite{DSun} we know that $\hat\ph$ is of finite exponential type in non-vertical directions;
we fix  $\th^+\in(0,\frac{\pi}{2})$, $\th^-\in(\frac{\pi}{2},\pi)$ and
$\QR\low \defeq \big\{\, z \in \Clog \mid
\arg z\in (-\th^+ -\tfrac{\pi}{2}, -\th^- +\tfrac{\pi}{2}), \;
\RE( z \,\ee^{\I\th^+} ) > R, \; \RE( z \,\ee^{\I\th^-} ) > R \,\big\}$,
with $R$ large enough.
Identifying $z\in \QR\low$ with its projection in~$\C$,
we have $\IM z < 0$ and the branches of $\log$ used in~$v_*^+$
and~$v_*^-$ differ by $-2\pi\I$;
for $m_0\geq1$, deforming the contour of integration (see
Figure~\ref{figdeformHank}), we get
\begin{multline*}
v_*^+(z)-v_*^-(z) = -2\pi\I\rho + (\cL^+-\cL^-)\hat\ph(z) \\[1ex]
= -2\pi\I\rho +
\Bigg(\sum_{m=1}^{m_0}\int_{\ga_m}+\int_{\ti\ga_{m_0}}\Bigg)
\,\ee^{-z\ze} \hat\ph(\ze) \,\dd\ze
= -2\pi\I\rho + \sum_{m=1}^{m_0} I_m
+ O(\ee^{(2\pi m_0+\pi)\IM  z}),
\end{multline*}
with $I_m = \int_{\ga_m}
\ee^{-z\ze} \cont_{\Ga_m}\hat\ph(\ze) \, \dd\ze
= \int_{\ga_m}
\ee^{-z\ze} \cont_{\Ga_m} \{\hat\ph(\ze)\}_{N_m} \, \dd\ze$.
Since each~$\ga_m$ can be expressed as the difference of two paths to
which Lemma \ref{Lmzalphares} applies, we have
\[
I_m = z^{\al_m} \int_{\ga_m} \ee^{-z\ze} \cont_{\Ga_m}
\hat\Phi^{(m)}(\ze) \, \dd\ze.
\]
Denoting by $\Ha^-$ a $\th^-$-rotated Hankel contour, we get
$I_m = z^{\al_m}\, \ee^{-\om_m z}
\int_{\Ha^-} \ee^{-z\ze} \cont_{\Ga_m}\hat\Phi^{(m)}(\om_m+\ze) \,\dd\ze
= S^{\Ga_m}_{\om_m} \,\ee^{\al_m\log z -\om_m z -\om_m \cL^-\hat\ph}$
by Lemma~\ref{LmHankel} and~\eqref{eqBEb}
(Borel-Laplace summation commutes with exponentiation---see \eg \cite{Sau14}).
%
\begin{figure}
\begin{center}
\epsfig{file=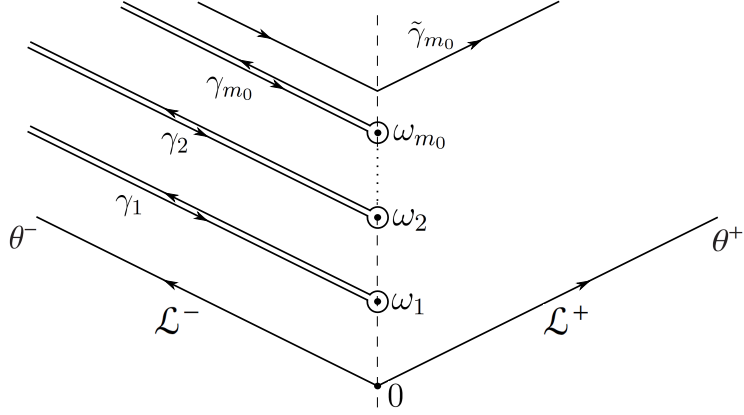,height=5.4cm,angle = 0}
\end{center}
\vspace{-.5cm}
\caption{\label{figdeformHank} Deformation of the contour.}
\end{figure}
%
Thus
\[
(h_*\low-\id)\circ v_*^- = v_*^+-v_*^- = -2\pi\I\rho +
\sum_{m=1}^{m_0} S^{\Ga_m}_{\om_m} \, \ee^{-\om_m ( -2\pi\I\rho + v_*^- )} +
O(\ee^{(2\pi m_0+\pi)\IM  z})
\quad \text{on~$\QR\low$,}
\]
whence
$A_{-m} = S^{\Ga_m}_{\om_m}\, \ee^{2\pi\I\rho\om_m}$
for $m>0$.
The case $m<0$ is similar, except that,
on the domain $\QR\up$ defined analogously with $-\pi < \th^- < -\frac{\pi}{2} < \th^+ < 0$,
the two branches of $\log$ used in~$v_*^\pm$ match
and the orientation of the paths differs.
\end{proof}


\begin{proof}[Proof of Theorem~\ref{ThBE}]
First observe that
$b_\al \in z^{-\be-2}\C\{z\ii\}$ because
$b_*$ and $[\ti\ph]_N \in \C\{z\ii\}$ hence $b_N \in \C\{z\ii\}$,
and
\[
b_N = C_{\id-1} \{\ti\ph\}_N - C_{\id+b} \{\ti\ph\}_N \in z^{-N-2}\C[[z\ii]].
\]
Thus the formal series $\ti\Phi_k$ are well defined in $z^{-\be-1}\C[[z\ii]]$;
they are $2\pi\I\Z$-resurgent because this property is preserved
by~$E_\be$ and~$B_\al$ (Lemma~\ref{LmCommutAl}) and we start with
$\gB\ti\Phi_0 = \hat E_\be \hat b_\al =
\frac{\hat b_\al(\ze)}{\ee^\ze-1}$ meromorphic on~$\Clog$.
Their $(\om,\Ga)$-simplicity and the existence of contants
$S_{\om,k}^\Ga$ such that~\eqref{eqBE} holds
result by induction on~$k$ (using the fact that $B^\om\ti\Psi_k \in
z^{-2}\C[[z\ii]]$ for all~$k$) from
%
%
\begin{Lm}   \label{LmCommutAlBal}
If $\ti\phi$ is $(\om,\Ga)$-simple, then so is $B_\al\ti\phi$,
with $\Al B_\al \ti\phi = B^\om \Al\ti\phi$.
If moreover $\Al\ti\phi \in z^{-2}\C[[z\ii]]$ (\ie $c=\hat\chi(0)=0$
in~\eqref{eqsimplsing}), then also $E_\be\ti\phi$ is $(\om,\Ga)$-simple
and $\Al E_\be \ti\phi - E \Al\ti\phi$ is a constant.
\end{Lm}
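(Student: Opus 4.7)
My plan is to treat the two assertions of the lemma separately, exploiting the Borel-side descriptions of $E_\be$ and $E$.

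For the first assertion, I would combine the two identities of Lemma~\ref{LmCommutAl} with one elementary algebraic identity linking $c_\al$ and $b_*$. The $(\om,\Ga)$-simplicity of $B_\al\ti\phi = c_\al\,C_{\id+b}\ti\phi - \ti\phi$ is then immediate from Lemma~\ref{LmCommutAl}, since multiplication by $c_\al\in\C\{z\ii\}$ and composition with $\id+b$ each preserve this property. The key observation is that, by $\al=-\rho\om$ together with the definitions of $c_\al$ and $b_*$, one has
\[
c_\al\,\ee^{-\om b} \;=\; \ee^{-\om b_*}.
\]
Combining this with the two formulas of Lemma~\ref{LmCommutAl} will yield
\[
\Al B_\al\ti\phi = c_\al\,\Al(C_{\id+b}\ti\phi) - \Al\ti\phi = c_\al\,\ee^{-\om b}\,C_{\id+b}\Al\ti\phi - \Al\ti\phi = \ee^{-\om b_*}C_{\id+b}\Al\ti\phi - \Al\ti\phi = B^\om\Al\ti\phi.
\]

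For the second assertion I would work directly on the Borel side, where $E_\be$ corresponds to multiplication by $1/(\ee^\ze-1)$, a function meromorphic on $\C$ with simple poles exactly at $2\pi\I\Z$. Hence $\hat\phi(\ze)/(\ee^\ze-1)$ is $2\pi\I\Z$-resurgent, and near $\om\in 2\pi\I\Z$ I would use the identity $\ee^{\om+\ze}-1 = \ee^\ze-1$. The hypothesis $\Al\ti\phi\in z^{-2}\C[[z\ii]]$ translates, via~\eqref{eqsimplsing}, into $c=0$ and $\hat\chi(0)=0$; thus $\hat\chi(\ze) = \ze\,\hat\chi_1(\ze)$ with $\hat\chi_1\in\C\{\ze\}$. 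Introducing $g(\ze)\defeq\ze/(\ee^\ze-1)$, which is entire with $g(0)=1$, a direct expansion of $\cont_\Ga\hat\phi(\om+\ze)/(\ee^\ze-1)$ produces
\[
\frac{\hat R(0)}{\ze} + (\hat\chi_1\,g)(\ze)\,\frac{\log\ze}{2\pi\I} + (\text{germ holomorphic at $0$}),
\]
showing that $E_\be\ti\phi$ is $(\om,\Ga)$-simple with $\Al E_\be\ti\phi = 2\pi\I\,\hat R(0) + \gB\ii(\hat\chi_1\,g)$. Independently, since $\Al\ti\phi = \gB\ii\hat\chi$, the Borel image of $E\Al\ti\phi$ is $\hat\chi/(\ee^\ze-1) = \hat\chi_1\,g$ (the condition $\hat\chi(0)=0$ is precisely what cancels the pole), so that $E\Al\ti\phi = \gB\ii(\hat\chi_1\,g)$. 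Subtracting will then give $\Al E_\be\ti\phi - E\Al\ti\phi = 2\pi\I\,\hat R(0)\in\C$, as required.

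The only mildly delicate point will be the bookkeeping in the second part: cleanly decomposing $\cont_\Ga\hat\phi(\om+\ze)/(\ee^\ze-1)$ into its simple-pole, logarithmic and holomorphic components and recognizing that the constant mismatch between $\Al E_\be\ti\phi$ and $E\Al\ti\phi$ comes entirely from the pole-at-$0$ generated by dividing the regular germ $\hat R$ by $\ee^\ze-1$. Once this decomposition is carried out, everything else is a mechanical application of Lemma~\ref{LmCommutAl} and of the Borel correspondences defining $E_\be$ and $E$.
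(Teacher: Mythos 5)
Your proof is correct and takes essentially the same route as the paper: Lemma~\ref{LmCommutAl} together with the identity $c_\al\,\ee^{-\om b} = \ee^{-\om b_*}$ for the first part, and direct Borel-side division by $\ee^\ze-1$, exploiting $\ee^{\om+\ze}-1=\ee^\ze-1$ and $\hat\chi(0)=0$, for the second. The only slip is that $g(\ze)=\ze/(\ee^\ze-1)$ is not entire (it has simple poles on $2\pi\I\Z^*$); it is merely holomorphic near $0$, which is all your argument actually uses, so the reasoning stands.
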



\begin{proof}[Proof of Lemma~\ref{LmCommutAlBal}]
The first identity results from Lemma~\ref{LmCommutAl} and $\ee^{-\om b} c_\al = \ee^{-\om b_*}$.
For the second one, in view of~\eqref{eqsimplsing}, the analytic
continuation of $\gB E_\be\ti\phi$ is
$ \cont_\Ga \hat E_\be\hat\phi(\om+\ze) =
\frac{\hat\chi(\ze)}{\ee^\ze-1} \frac{\log\ze}{2\pi\I} + \frac{\hat R(\ze)}{\ee^\ze-1}$,
hence $\Al E_\be \ti\phi = E\gB\ii\hat\chi + 2\pi\I\hat R(0)$.
\end{proof}

%
We finish the proof of Theorem~\ref{ThBE} by estimating $\hat\Phi_k \defeq \gB\ti\Phi_k$ by
means of the recursive formula
$\hat\Phi_k = \hat E_\be \hat B_\al \hat\Phi_{k-1}$.
%
%

%
\begin{Lm}   \label{LmEstimEbeBal}
For $0<\eps<1$, $L>1$, let $\tRL$ denote the set of all naturally
parametrised paths $\ga \col (0,\ell(\ga)] \to \Clog$, where
$\ell(\ga) < L$ is the length of~$\ga$,
for which there exists $\th\in[-\pi,3\pi]$ such that
$\ga(s) = s\,\ee^{\I\th}$ for $s\leq\eps$ and
$\dist\big( \ga(s), 2\pi\I\Z \big) > \eps$ for $s\geq\eps$.
%
%
For any integer $\ell\geq0$, let $\VL\ell$ denote the space of
all $\hat\phi \in \ze^{\be+\ell}\C\{\ze\}$ admitting analytic
continuation along the paths of~$\tRL$ and such that
$\NL{\hat\phi}{\ell} \defeq \sup \big\{\,
s^{-\RE\be-\ell} \abs{\cont_\ga \hat\phi\big( \ga(s) \big)},
\;\text{for $\ga\in\tRL$ and $s\in \big( 0,\ell(\ga) \big]$}
\,\big\}$ is finite.
Then the operator $\hat E_\be\hat B_\al$ leaves invariant the spaces $\VL\ell$ and,
for each $\eps$ and~$L$, there exists $M(\eps,L)>0$ such that
\[
\NL{\hat E_\be\hat B_\al\hat\phi}{\ell} \leq
\tfrac{M(\eps,L)}{\RE\be+\ell+1} \NL{\hat\phi}{\ell}
\quad \text{for all $\hat\phi\in\VL\ell$ and $\ell\geq0$.}
\]
\end{Lm}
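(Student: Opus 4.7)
The plan is to exploit the explicit kernel representation of $\hat B_\al$ provided by formula~\eqref{eqBalKal} combined with the pointwise multiplier $\hat E_\be$. Every path $\ga\in\tRL$ starts radially as $\ga(s)=s\,\ee^{\I\th}$ and has projection avoiding $2\pi\I\Z$, so the hypotheses of~\eqref{eqBalKal} are met and
\[
\cont_\ga\hat B_\al\hat\phi\bigl(\ga(s)\bigr) = \int_0^s K_\al\bigl(\ga(\sigma),\ga(s)\bigr)\,\cont_\ga\hat\phi\bigl(\ga(\sigma)\bigr)\,\ga'(\sigma)\,\dd\sigma,
\]
which extends $\hat B_\al\hat\phi$ analytically along $\ga$; the subsequent pointwise division by $\ee^\ze-1$ preserves the continuation since $\ga$ avoids $2\pi\I\Z$. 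Hence $\hat E_\be\hat B_\al\hat\phi$ admits analytic continuation along every path of~$\tRL$, and one checks that it belongs to $\ze^{\be+\ell}\C\{\ze\}$ near the origin (the integration gains one power of $\ze$, the division by $\ee^\ze-1\sim\ze$ restores it).

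Next I would establish the quantitative bound. The kernel $K_\al$ is entire on $\C\times\C$, hence uniformly bounded, say $|K_\al(\xi,\ze)|\le C_K(L)$, on the bidisk $\{|\xi|,|\ze|\le L\}$, which contains all pairs $(\ga(\sigma),\ga(s))$ with $\ga\in\tRL$. Using the natural parametrization, so that $|\ga'(\sigma)|=1$, together with the defining bound $|\cont_\ga\hat\phi(\ga(\sigma))|\le\NL{\hat\phi}{\ell}\,\sigma^{\RE\be+\ell}$, the integral is bounded by
\[
C_K(L)\,\NL{\hat\phi}{\ell}\int_0^s \sigma^{\RE\be+\ell}\,\dd\sigma = \frac{C_K(L)\,\NL{\hat\phi}{\ell}}{\RE\be+\ell+1}\,s^{\RE\be+\ell+1}.
\]
This step produces the crucial denominator $\RE\be+\ell+1$.

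Finally I would bound $|\ee^{\ga(s)}-1|\ii$ uniformly for $\ga\in\tRL$ and $s\in(0,\ell(\ga)]$. On the radial initial segment $s\le\eps$ one has $\ga(s)=s\ee^{\I\th}$ with $|\ga(s)|\le\eps<1$, and the elementary inequality $|\ee^w-1|\ge c\,|w|$ for $|w|\le 1$ gives $|\ee^{\ga(s)}-1|\ii\le c\ii/s$. For $s\ge\eps$ the distance hypothesis $\dist(\ga(s),2\pi\I\Z)>\eps$ together with $|\ga(s)|<L$ confines $\ga(s)$ to a set on which the meromorphic function $(\ee^\ze-1)\ii$ is uniformly bounded by some $C(\eps,L)$ (compactness), and $s\ge\eps$ then converts this into a bound of the form $L\,C(\eps,L)/s$. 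Combining the two regimes yields $|\ee^{\ga(s)}-1|\ii\le M_0(\eps,L)/s$ throughout; multiplying the previous estimate by this factor absorbs one power of $s$ and gives the claimed inequality with $M(\eps,L)=C_K(L)\,M_0(\eps,L)$. The main obstacle is precisely this last uniform estimate: one must handle simultaneously the pole of $(\ee^\ze-1)\ii$ at the origin (which absorbs the radial $1/s$ behaviour) and the poles at the other points of $2\pi\I\Z$ (which are avoided thanks to the distance hypothesis built into the definition of $\tRL$).
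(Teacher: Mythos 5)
Your proposal is correct and follows essentially the same route as the paper: you use the kernel representation~\eqref{eqBalKal} together with natural parametrization to gain the factor $1/(\RE\be+\ell+1)$ from $\int_0^s\sigma^{\RE\be+\ell}\,\dd\sigma$, and then you bound $\abs{\ee^{\ga(s)}-1}^{-1}$ by a constant times $1/s$. The only cosmetic difference is in the last step, where the paper records the bound $\abs{\ga(s)}\ge\ka s$ and a uniform bound $M_0$ on $\abs{\ze/(\ee^\ze-1)}$, whereas you obtain the equivalent estimate by treating the regimes $s\le\eps$ and $s\ge\eps$ separately.
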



\begin{proof}[Proof of Lemma~\ref{LmEstimEbeBal}]
One can find $\ka=\ka(\eps,L) > 0$ such that, for each $\ga\in\tRL$
and $s\in(0,\ell(\ga)]$,
$\abs{\ga(s)} \geq \ka s$
(because $\arg\ga(s)$ is uniformly bounded).
Let
$M_0=M_0(\eps,L) \defeq \sup \big\{\,
\abs*{\frac{\ze}{\ee^\ze-1}}, \;\text{for}\;
  \abs\ze < L,\; \dist\big( \ze, 2\pi\I\Z^* \big) > \eps
\,\big\}$.
From~\eqref{eqBalKal}, we get
$\NL{\hat B_\al\hat\phi}{\ell+1} \leq
\tfrac{N_L}{\RE\be+\ell+1} \NL{\hat\phi}{\ell}$
with $N_L \defeq \sup_{\abs{\xi}, \abs{\ze}<L} \{\,
\abs{K(\xi,\ze)} \,\}$,
and
$\NL{\hat E_\be\hat B_\al\hat\phi}{\ell} \leq
\tfrac{N_L M_0}{\ka(\RE\be+\ell+1)} \NL{\hat\phi}{\ell}$.
\end{proof}
%

We have $\abs*{ \frac{\al}{\al+N+1} } < 1$ because $\RE\al > - \frac{N+1}{2}$.
Let us choose an integer $d\geq0$ so that
$\tfrac{M(\eps,L)}{\RE\be+d+1} \leq
\La \defeq \max\Big\{ \abs*{ \frac{\al}{\al+N+1} }, \demi \Big\}$.
For any $\hat\phi \in\ze^\be\C\{\ze\}$, we use the notation
\[
\hat\phi = \bd{\hat\phi} + \cd{\hat\phi},
\qquad
\bd{\hat\phi} \in \gE_d \defeq
\Span(\ze^\be, \ze^{\be+1}, \ldots,
\ze^{\be+d-1} ),
\quad \cd{\hat\phi} \in \ze^{\be+d}\C\{\ze\}.
\]
%
%
Now $\hat\phi \in \ze^{\be+d}\C\{\ze\} \,\Rightarrow\,
\hat B_\al\hat\phi = \al 1*\hat\phi + O(\ze^{\be+d+2})
\,\Rightarrow\,
\hat E_\be\hat B_\al\hat\phi =
\frac{\al}{\ze} 1*\hat\phi + O(\ze^{\be+d+1})
\in \ze^{\be+d}\C\{\ze\}$, thus
\[
\bd{\hat\Phi_k} =  A \bd{\hat\Phi_{k-1}},
\qquad
\cd{\hat\Phi_k} = \cd{\hat E_\be\hat B_\al \bd{\hat\Phi_{k-1}}} +
\hat E_\be\hat B_\al \cd{\hat\Phi_{k-1}},
\]
where $A\in\End\gE_d$ is defined by $A\hat\phi \defeq \bd{\hat E_\be\hat B_\al \hat\phi}$
and has a triangular matrix in the basis $(\ze^\be, \ze^{\be+1}, \ldots,
\ze^{\be+d-1} )$, with
$A \ze^{\be+\ell} = \la_\ell \ze^{\be+\ell}\big(1+O(\ze)\big)$,
$\la_\ell \defeq \tfrac{\al}{\be+\ell+1}$,
$\ell = 0, \ldots, d-1$.
By the choice of $\be$, the eigenvalues have modulus
$\abs{\la_\ell} \leq \La$,
hence $\norm{ \bd{\hat\Phi_k} } = O(\La^k)$ for any norm on~$\gE_d$.
Since
$\hat\phi\in \gE_d\mapsto\cd{\hat E_\be \hat B_\al\hat\phi} \in \VL d$
is linear, we can find $C_0'$ such that
$\NL{ \cd{\hat E_\be \hat B_\al\bd{\hat\Phi_{k-1}}} }{d} \leq
C'_0 \La^k$ for all $k\geq1$, hence
$\NL{ \cd{\hat\Phi_k} }{d} \leq  C'_0 \La^k + \La \NL{ \cd{\hat\Phi_{k-1}} }{d}$.
We thus get
$\NL{ \cd{\hat\Phi_k} }{d} \leq \big(
\NL{ \cd{\hat\Phi_0} }{d} + C'_0 k
\big) \La^k$ for all $k\geq0$ and,
increasing slightly~$\La$, the desired bounds for $\abs*{ \cont_\ga \hat\Phi_k }$.


By uniform convergence, we can now define a function $\hat\Phi = \sum_{k\geq0}
\hat\Phi_k \in \ze^\be\C\{\ze\}$, whose inverse formal Borel transform
$\ti\Phi \in z^{-\be-1}\C[[z\ii]]$ satisfies $\ti\Phi = E_\be b_\al +
E_\be B_\al \ti\Phi$, whence
\[
C_{\id-1} \ti\Phi = c_\al C_{\id+b} \ti\Phi + b_\al.
\]
Multiplying both sides by $z^\al(1-z\ii)^\al$, we find that
$\ti\psi \defeq z^\al\ti\Phi \in z^{-N-1}\C[[z\ii]]$ satisfies
$C_{\id-1} \ti\psi = C_{\id+b} \ti\psi + b_N$,
an equation of which $\{\ti\ph\}_N$ is the unique solution in $\zcz$,
hence $\ti\Phi = z^{-\al}\{\ti\ph\}_N$.


The estimates for the $\hat\Phi_k$'s imply similar estimates for the monodromies
$\hat\chi_k \defeq \hAl\ti\Phi_k$, and hence for the residues
$S^\Ga_{\om,k}$  of the functions
$\cont_\ga\hat\Phi_k(\om+\ze) - \hat\chi(\ze) \frac{\log\ze}{2\pi\I}$.
Equation~\eqref{eqBEb} follows by uniform convergence.
\end{proof}


%




\smallskip

\noindent{\footnotesize \emph{Acknowledgements.}
  The authors are grateful to F.~Fauvet and M.~Yampolsky for fruitful
  discussions and encouragements.
  The first author expresses thanks to the Centro di Ricerca
  Matematica Ennio De Giorgi and Fibonacci Laboratory for
  hospitality.
  The second author acknowledges the support of the French National
  Research Agency under the reference ANR-12-BS01-0017.  }




\vspace{.9cm}

\noindent
Artem Dudko:
Institute for Mathematical Sciences,
University of Stony Brook, NY, USA.

\bigskip

\noindent
David Sauzin:
CNRS UMI 3483 - Laboratoire Fibonacci,
Centro di Ricerca Matematica Ennio De Giorgi,
Scuola Normale Superiore di Pisa, Italy.


\end{document}